\numberwithin{equation}{section}
\newtheorem{theorem}{Theorem}[section]
\newtheorem{lemma}[theorem]{Lemma}
\newtheorem{proposition}[theorem]{Proposition}
\theoremstyle{definition}
\newcommand\Supp{\operatorname{Supp}}
\newcommand\Ass{\operatorname{Ass}}
\newcommand\Ann{\operatorname{Ann}}
\newcommand\Rad{\operatorname{Rad}}
\newcommand\grade{\operatorname{grade}}
\newcommand\height{\operatorname{height}}
\newcommand\mAss{\operatorname{mAss}}
\begin{document}

\title[Linearly equivalent ideal topologies]{Note on linearly equivalent ideal topologies over Noetherian modules}%
\author{Adeleh Azari, Simin Mollamahmoudi and Reza Naghipour$^*$}%
\address{Department of Mathematics, University of Tabriz,
Tabriz, Iran, and School of Mathematics, Institute for Research in Fundamental
Sciences (IPM), P.O. Box: 19395-5746, Tehran, Iran.}%
\email{adeleh\_azari@yahoo.com (Adeleh Azari)}
\email{mahmoudi.simin@yahoo.com (Simin Mollamahmoudi)}
\email{naghipour@ipm.ir (Reza Naghipour)} \email {naghipour@tabrizu.ac.ir (Reza Naghipour)}
\thanks{ 2010 {\it Mathematics Subject Classification}: 13B20, 13B21, 13A30.\\
This research was in part supported by a grant from IPM \\
$^*$Corresponding author: e-mail: {\it naghipour@ipm.ir} (Reza
Naghipour)}%

\subjclass{}%
\keywords{Adic topology, symbolic power,  symbolic topology.}%

\begin{abstract}
Let $R$ be a commutative Noetherian ring, and let $N$ be a  non-zero finitely generated $R$-module. In this paper,
the main result asserts that for any  $N$-proper ideal $\frak a$  of  $R,$  the $\frak a$-symbolic topology on $N$ is linearly equivalent  to
the  $\frak a$-adic topology on $N$ if and only if, for every $\frak p\in \Supp(N)$,
$\Ass_{R_{\mathfrak  {p} }}N_{\mathfrak  {p}}$ consists of a single prime ideal and $\dim N\leq 1$.
\end{abstract}
\maketitle
\section {Introduction}
Let $R$ be a commutative Noetherian ring, $\frak a$ an ideal of $R$  and let $N$ be a  non-zero finitely generated $R$-module.  For a non-negative integer $n$,
the $n$th {\it symbolic power} of $\frak a$ w.r.t. $N$, denoted by $(\frak aN)^{(n)}$, is defined to be the intersection of those primary
components of $\frak a ^nN$ which correspond to the minimal elements of $\Ass_RN/\frak a N$.  Then the $\frak a$-adic filtration $\{\frak a ^nN\}_{n\geq0}$
and the $\frak a$-symbolic filtration $\{(\frak aN)^{(n)}\}_{n\geq0}$ induce topologies on $N$ which are called the  $\frak a$-adic topology and $\frak a$-symbolic topology, respectively.  These two topologies are said to be linearly equivalent if, there is an integer $k\geq0$ such that
$(\frak aN)^{(n+k)}\subseteq \frak a ^nN$ for all integers $n\geq0$.

Our main point of the present paper concerns an invistigation of the linearly equivalent of the $\frak a$-symbolic and the  $\frak a$-adic topology topologies on $N$. More precisely we shall show that:

\begin{theorem}
Let $R$ be a commutative Noetherian ring, and let $N$ be a  non-zero finitely generated $R$-module. Then for any  $N$-proper ideal $\frak a$  of  $R,$  the $\frak a$-symbolic topology on $N$ is linearly equivalent  to
the  $\frak a$-adic topology on $N$ if and only if, for every $\frak p\in \Supp(N)$,
$\Ass_{R_{\mathfrak  {p} }}N_{\mathfrak  {p}}$ consists of a single prime ideal and $\dim N\leq 1$.
\end{theorem}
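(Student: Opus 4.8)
The plan is to translate linear equivalence into a statement about the embedded (i.e.\ non-minimal) associated primes of the powers $\mathfrak{a}^n N$, since $(\mathfrak{a}N)^{(n)}$ is obtained from $\mathfrak{a}^n N$ precisely by discarding the embedded primary components. The engine will be the following local dichotomy: if $\mathfrak{q}\in\Supp(N)$ is an embedded prime of some $\mathfrak{a}^n N$, then after localizing at $\mathfrak{q}$ the module $N_{\mathfrak{q}}$ has a minimal prime $\mathfrak{p}_0\subsetneq\mathfrak{q}$ with $\mathfrak{q}R_{\mathfrak{q}}\in\Ass_{R_{\mathfrak{q}}}(N_{\mathfrak{q}}/\mathfrak{a}^n N_{\mathfrak{q}})$, and one checks this can only occur when either $N_{\mathfrak{q}}$ has a second associated prime below $\mathfrak{q}$ or $\dim R_{\mathfrak{q}}/\mathfrak{p}_0\geq 2$. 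The two hypotheses are designed exactly to forbid these two possibilities.

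For the implication ``$\Leftarrow$'' I would fix an $N$-proper ideal $\mathfrak{a}$ and show that under the two conditions $\mathfrak{a}^n N$ has no embedded prime for $n\gg 0$, whence $(\mathfrak{a}N)^{(n)}=\mathfrak{a}^n N$ for $n\geq m_0$ and linear equivalence holds with $k=m_0$. Concretely, for a candidate embedded prime $\mathfrak{q}$ I pass to $R_{\mathfrak{q}}$: the $\Ass$-condition forces $N_{\mathfrak{q}}$ to be $\mathfrak{p}_0$-coprimary, and $\dim N\leq 1$ forces $\Supp(N_{\mathfrak{q}}/\mathfrak{a}N_{\mathfrak{q}})\subseteq\{\mathfrak{p}_0R_{\mathfrak q},\mathfrak{q}R_{\mathfrak{q}}\}$. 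For $\mathfrak{q}R_{\mathfrak{q}}$ to be embedded one needs $\mathfrak{p}_0R_{\mathfrak q}\in\Supp(N_{\mathfrak{q}}/\mathfrak{a}N_{\mathfrak{q}})$, i.e.\ $\mathfrak{a}\subseteq\mathfrak{p}_0$; but then, since $\sqrt{\Ann_{R_{\mathfrak q}}N_{\mathfrak{q}}}=\mathfrak{p}_0R_{\mathfrak{q}}$, we get $\mathfrak{a}^j N_{\mathfrak{q}}=0$ for $j\gg 0$, so $\mathfrak{q}$ ceases to be embedded once $n$ is large. Finiteness of $\bigcup_n\Ass(N/\mathfrak{a}^n N)$ (Brodmann) makes the bound $m_0$ uniform over the finitely many candidate primes.

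For ``$\Rightarrow$'' I argue contrapositively, building an $N$-proper $\mathfrak{a}$ defeating linear equivalence. If $\dim N\geq 2$, pick a minimal prime $\mathfrak{p}_0$ with $\dim R/\mathfrak{p}_0\geq 2$ and a maximal ideal $\mathfrak{q}\supseteq\mathfrak{p}_0$ with $\operatorname{ht}(\mathfrak{q}/\mathfrak{p}_0)\geq 2$, choose $x\in\mathfrak{q}\setminus\mathfrak{p}_0$ with $\dim R_{\mathfrak{q}}/(\mathfrak{p}_0+(x))R_{\mathfrak{q}}\geq 1$, and set $\mathfrak{a}=x\mathfrak{q}$. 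Working in $R_{\mathfrak{q}}$ one shows $x^m N\subseteq(\mathfrak{a}N)^{(m)}$ (because $\mathfrak{q}R_{\mathfrak{q}}$ is not minimal over $N_{\mathfrak{q}}/\mathfrak{a}N_{\mathfrak{q}}$, so for each minimal prime $\mathfrak{P}$ an element of $\mathfrak{q}^m$ lying outside $\mathfrak{P}$ carries $x^m$ into $\mathfrak{a}^m N_{\mathfrak{q}}$), while $x^{n+k}z\in\mathfrak{a}^n N=x^n\mathfrak{q}^n N$ would force $x^k z\in\mathfrak{q}^n N_{\mathfrak{q}}$ for a fixed nonzero $z$; as $x$ is a nonzerodivisor on the coprimary $N_{\mathfrak{q}}$ and $\bigcap_n\mathfrak{q}^n N_{\mathfrak{q}}=0$, this fails for $n\gg 0$, beating any fixed shift $k$. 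If instead the $\Ass$-condition fails, choose $\mathfrak{q}$ with two associated primes $\mathfrak{p}_1$ (minimal) and $\mathfrak{p}'$ below it and set $\mathfrak{a}=\mathfrak{p}_1$; then $\mathfrak{p}_1$ is the unique minimal prime of $N/\mathfrak{a}N$, so $(\mathfrak{a}N)^{(m)}$ is the single $\mathfrak{p}_1$-primary component, which stabilizes to $\ker(N\to N_{\mathfrak{p}_1})$ and contains a nonzero $z$ with $\Ann z=\mathfrak{p}'$, whereas $\bigcap_n\mathfrak{p}_1^n N_{\mathfrak{q}}=0$ by Krull's intersection theorem ejects $z$ from $\mathfrak{p}_1^n N$ for large $n$.

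The hard part will be the converse: producing one test ideal whose symbolic powers provably fail to contract into the adic powers. The delicate point is the $\dim N\geq 2$ construction, where $x$ must be chosen transverse to $\mathfrak{p}_0$ yet not cut $\Supp(N)$ down to a point (so that $\mathfrak{q}$ remains embedded rather than minimal in $N/\mathfrak{a}N$), and $(\mathfrak{a}N)^{(m)}$ must be bounded below by $x^m N$; the separation $x^{n+k}N\not\subseteq\mathfrak{a}^n N$ then rests on the nonzerodivisor property of $x$ together with Krull's intersection theorem. Throughout, the routine but essential glue is checking that symbolic powers localize correctly, $((\mathfrak{a}N)^{(m)})_{\mathfrak{q}}=(\mathfrak{a}_{\mathfrak{q}}N_{\mathfrak{q}})^{(m)}$, so that a local failure propagates to a global one.
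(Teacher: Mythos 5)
Your proposal is correct, and it takes a genuinely different route from the paper. The paper never touches primary decompositions directly: it translates linear equivalence into the condition $E(\frak b,N)=\mAss_R(N/\frak bN)$ on essential primes (Lemma 2.3, resting on \cite{na3}), proves a multiplicativity statement for asymptotic primes $\bar{A^*}(x\frak a,N)$ via McAdam's \cite[Proposition 3.26]{mc1} to rule out $\dim N>1$, and for the $\Ass$ condition invokes an existence lemma from the unpublished \cite[Proposition 4.2]{amn1} producing an ideal with $\frak m\in Q(\frak a,N)\setminus\mAss(N/\frak aN)$. You instead argue at the level of embedded components. In the direction (ii)$\Rightarrow$(i) you prove something strictly stronger than the paper does, namely that $\frak a^nN$ has no embedded primes for $n\gg 0$ (your localization shows any embedded candidate $\frak q$ forces $\frak aR_{\frak q}\subseteq\Rad(\Ann N_{\frak q})$, hence $\frak a^jN_{\frak q}=0$; Brodmann makes the bound uniform), so the filtrations eventually coincide rather than being merely linearly cofinal. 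In the direction (i)$\Rightarrow$(ii) your test ideal $x\frak q$ is essentially the paper's $x\frak p$, but you verify the failure by hand (lower bound $x^mN\subseteq(\frak aN)^{(m)}$ from the minimal primes of $(x)$, then cancellation by the nonzerodivisor $x$ plus Krull intersection), and your test ideal $\frak p_1$ with the witness $z$ satisfying $\Ann z=\frak p'$ replaces the appeal to \cite{amn1} entirely. What each approach buys: yours is self-contained, elementary, and independent of unpublished results; the paper's situates the theorem inside the asymptotic-prime machinery and gets the hard quantifier over all $n$ for free from \cite{na3}. One presentational point to fix: in your $\dim N\geq 2$ construction you use that $N_{\frak q}$ is coprimary (so that $x\notin\frak p_0$ is a nonzerodivisor on $N_{\frak q}$), which is only guaranteed after the case where the $\Ass$ condition fails has been disposed of, so the two cases of your contrapositive must be taken in that order; with that ordering made explicit the argument is complete.
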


The result in Theorem 1.1 is proved in Theorem 2.4. Our method is based on the theory of the asymptotic and essential primes
of $\frak a$ w.r.t. $N$ which were introduced by McAdam \cite{Mc2}, and in \cite{ah}, Ahn extended these concepts to a finitely generated $R$-module $N$.  One of our tools for proving Theorem 1.1 is the following, which plays a key role in this paper.

\begin{proposition} \label{1.4}
Let $R$ be a commutative Noetherian ring and $\frak{a}$  an ideal of $R$.  Let $N$ be a non-zero finitely generated $R$-module  such that $\dim N>0,$
and let $\mathfrak  {p}\in \Supp(N)\cap V(\frak{a}).$  Then the following conditions are equivalent:

$\rm(i)$  $\mathfrak  {p}\in\bar{A^{*}}(\frak{a},N).$

$\rm(ii)$ $\mathfrak{p}\in\bar{A^{*}}(\frak{a}\frak{b},N)$,  for any $N$-proper ideal $\frak{b}$ of $R$ with $\height_N \frak{b} > 0$.

$\rm(iii)$ $\mathfrak{p}\in\bar{A^{*}}(x\frak{a},N),$  for any  $N$-proper element $x$ of $R$ with
  $x\not\in{\bigcup_{\mathfrak  {p}\in \mAss_{R}N}}\mathfrak  {p}.$
\end{proposition}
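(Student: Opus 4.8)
The plan is to reduce everything to the local ring at $\mathfrak{p}$ and then to translate membership in $\bar{A^{*}}(\cdot,N)$ into a statement about analytic spreads on the minimal components of $N$. First I would use that $\bar{A^{*}}(\cdot,N)$ is compatible with localization, so that $\mathfrak{p}\in\bar{A^{*}}(\mathfrak{c},N)$ if and only if $\mathfrak{p}R_{\mathfrak{p}}\in\bar{A^{*}}(\mathfrak{c}R_{\mathfrak{p}},N_{\mathfrak{p}})$; after replacing $R,N,\mathfrak{p}$ by $R_{\mathfrak{p}},N_{\mathfrak{p}},\mathfrak{p}R_{\mathfrak{p}}$ I may assume $\mathfrak{p}$ is maximal. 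The engine is the essential‑prime characterization of McAdam \cite{Mc2}, in the module form of Ahn \cite{ah}: $\mathfrak{p}\in\bar{A^{*}}(\mathfrak{c},N)$ if and only if there is $z\in\mAss_{R}N$ with $z\subseteq\mathfrak{p}$ such that, in the domain $(R/z)_{\mathfrak{p}}$, the analytic spread of $(\mathfrak{c}+z)/z$ equals $\dim(R/z)_{\mathfrak{p}}=\dim(N/zN)_{\mathfrak{p}}$; equivalently, $\mathfrak{p}$ is the center of a Rees valuation of $\mathfrak{c}$ on some component of $N$. Here I write $\ell(\mathfrak{c};D)$ for the analytic spread, i.e. the Krull dimension of the fibre cone $\bigoplus_{n\ge0}\mathfrak{c}^{\,n}D/\mathfrak{p}\,\mathfrak{c}^{\,n}D$.

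The equivalence $\rm(i)\Leftrightarrow\rm(iii)$ should fall out cleanly from this characterization. For any $N$-proper $x$ with $x\notin\bigcup_{\mathfrak{q}\in\mAss_{R}N}\mathfrak{q}$ and any $z\in\mAss_{R}N$ with $z\subseteq\mathfrak{p}$, the element $x$ is a nonzerodivisor in the domain $(R/z)_{\mathfrak{p}}$, so multiplication by $x^{n}$ induces, in each degree $n$, an isomorphism between the graded piece of the fibre cone of $(\mathfrak{a}+z)/z$ and that of $(x\mathfrak{a}+z)/z$; hence $\ell\bigl(x\mathfrak{a}+z)/z;(R/z)_{\mathfrak{p}}\bigr)=\ell\bigl((\mathfrak{a}+z)/z;(R/z)_{\mathfrak{p}}\bigr)$, while $\dim(R/z)_{\mathfrak{p}}$ is unchanged. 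Thus the defining condition holds for $x\mathfrak{a}$ on the component $z$ exactly when it holds for $\mathfrak{a}$, and running over all $z$ gives $\mathfrak{p}\in\bar{A^{*}}(x\mathfrak{a},N)\Leftrightarrow\mathfrak{p}\in\bar{A^{*}}(\mathfrak{a},N)$ for each admissible $x$. (When $\mathfrak{p}\in\mAss_{R}N$ one takes $z=\mathfrak{p}$, where both spreads are $0=\dim$, so $\rm(i)$ and $\rm(iii)$ hold automatically; otherwise prime avoidance supplies at least one admissible $x$, since $\mathfrak{p}\not\subseteq\bigcup_{\mathfrak{q}\in\mAss_{R}N}\mathfrak{q}$.)

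It remains to bring in $\rm(ii)$. The implication $\rm(ii)\Rightarrow\rm(iii)$ is immediate: take $\mathfrak{b}=xR$, which is $N$-proper with $\height_{N}\mathfrak{b}>0$ precisely because $x$ is $N$-proper and avoids every minimal prime in $\Supp(N)$. For $\rm(i)\Rightarrow\rm(ii)$ I would argue by persistence of Rees valuations under products: if $v$ is a Rees valuation of $\mathfrak{a}$ on a component $R/z$ with center $\mathfrak{p}$, then since $\height_{N}\mathfrak{b}>0$ forces $\mathfrak{b}\not\subseteq z$, we have $v(\mathfrak{b})<\infty$ and $v(\mathfrak{a}\mathfrak{b})=v(\mathfrak{a})+v(\mathfrak{b})$, so $v$ survives as a Rees valuation of $\mathfrak{a}\mathfrak{b}$ with the same center, giving $\bar{A^{*}}(\mathfrak{a},N)\subseteq\bar{A^{*}}(\mathfrak{a}\mathfrak{b},N)$ and hence $\mathfrak{p}\in\bar{A^{*}}(\mathfrak{a}\mathfrak{b},N)$. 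Together with $\rm(i)\Leftrightarrow\rm(iii)$ this closes the cycle $\rm(i)\Rightarrow\rm(ii)\Rightarrow\rm(iii)\Rightarrow\rm(i)$. The step I expect to be the main obstacle is making the persistence in $\rm(i)\Rightarrow\rm(ii)$ rigorous in the module setting of Ahn—that is, pinning down the exact product behavior of the asymptotic primes of the integral‑closure filtration $\{\overline{\mathfrak{a}^{\,n}N}\}$ and confirming that multiplying by a positive‑$N$‑height ideal neither destroys a center at $\mathfrak{p}$ nor is needed to create one; the degreewise fibre‑cone computation of the second paragraph is what keeps the low‑dimensional and embedded‑prime cases consistent.
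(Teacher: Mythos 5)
Your overall strategy --- localize at $\mathfrak{p}$, characterize membership in $\bar{A^{*}}(\cdot,N)$ by an analytic-spread condition on the components of $N$, and check that this condition is unchanged by the modifications in (ii) and (iii) --- is the right circle of ideas, and your treatment of (iii)$\Rightarrow$(i) (the degreewise isomorphism of fibre cones under multiplication by a nonzerodivisor, plus prime avoidance using $\dim N>0$ to produce an admissible $x$) is essentially sound. Still, there are two genuine gaps. First, the characterization you use as your engine is misstated: membership of $\mathfrak{p}$ in $\hat{A^*}$ (equivalently, by \cite[Proposition 3.2]{na2}, in $\bar{A^{*}}(\cdot,N)$) is detected by the minimal primes of the \emph{completion} $N^*_{\mathfrak{p}}$ over $R^*_{\mathfrak{p}}$, not by the primes in $\mAss_RN$ themselves; for analytically reducible components the two tests differ, and the criterion as you state it is false in general. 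This is repairable --- a minimal prime of $zR^*_{\mathfrak{p}}$ contracts to $zR_{\mathfrak{p}}$ by going-down for the flat map $R_{\mathfrak{p}}\to R^*_{\mathfrak{p}}$, so your $x$ still avoids every relevant prime --- but the repair must be made before the fibre-cone computation applies.

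Second, and more seriously, the implication (i)$\Rightarrow$(ii) is not actually proved: you reduce it to the assertion that a Rees valuation of $\mathfrak{a}$ centered at $\mathfrak{p}$ ``survives'' as a Rees valuation of $\mathfrak{a}\mathfrak{b}$, and you yourself flag this as the main obstacle. The identity $v(\mathfrak{a}\mathfrak{b})=v(\mathfrak{a})+v(\mathfrak{b})$ holds for \emph{every} valuation with $v(\mathfrak{b})<\infty$ and says nothing about $v$ remaining irredundant for the filtration $\{\overline{(\mathfrak{a}\mathfrak{b})^n}\}$, so the inference as written is a non sequitur; the behaviour of Rees valuations (equivalently, of analytic spreads) under products of ideals is exactly the nontrivial content of this implication. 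This is the one point where the paper does not argue from scratch: it passes to the ring $R/\Ann_RN$ using \cite[Remark 2.4]{na2} together with the equality $\height_N\frak{b}=\height(\frak{b}+\Ann_RN/\Ann_RN)$ from \cite[Theorem 2.1]{na1}, and then invokes \cite[Proposition 3.26]{mc1}, which is precisely the statement that for $\height\frak{b}>0$ one has $\frak{p}\in\hat{A^*}(\frak{a})$ if and only if $\frak{p}\in\hat{A^*}(\frak{a}\frak{b})$. Unless you either cite that result (after the same module-to-ring reduction) or supply an actual proof of the product statement, your argument for (i)$\Rightarrow$(ii) is incomplete.
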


A prime ideal $\frak{p}$ of $R$ is called a {\it quitessential} (resp. {\it quitasymptotic}) {\it prime ideal of }$\frak{a}$
w.r.t. $N$ precisely when there exists $\frak{q}\in \Ass_{R^*_\frak{p}}N^*_\frak{p}$ (resp. $\frak{q}\in
\mAss_{R^*_\frak{p}}N^*_\frak{p}$) such that $\Rad(\frak{a}R^*_\frak{p}+ \frak{q})= \frak{p}R^*_\frak{p}$. The
set of quitessential (resp. quitasymptotic) prime ideals of $\frak{a}$ w.r.t. $N$ is denoted by $Q(\frak{a},N)$ (resp. $\bar{Q^*}(\frak{a}, N)$)
which is a finite set.

We denote by $\mathscr{R}$  the {\it graded Rees ring} $R[u,\frak{a}t] :=\oplus_{n\in \mathbb{Z}}\frak{a}^nt^n$ of $R$ w.r.t.
$\frak{a}$, where $t$ is an indeterminate and $u= t^{-1}$. Also, the {\it graded Rees module} $N[u, \frak{a}t] := \oplus_{n\in
\mathbb{Z}}\frak{a}^nN$ over $\mathscr{R}$ is denoted by $\mathscr{N}$, which is a finitely generated graded
$\mathscr{R}$-module.  Then we say that a  prime ideal $\mathfrak  {p}$ of $R$ is  an {\it essential prime ideal
of}  $\frak a$  w.r.t. $N,$ if $\mathfrak  {p}=\mathfrak  {q}\cap R$
for some $\mathfrak  {q} \in Q(u\mathscr{R},\mathscr{N}).$ The set of essential prime ideals
of $\frak{a}$ w.r.t. $N$ will be denoted by $E(\frak{a},N).$

Also, the {\it asymptotic prime ideals of} $\frak{a}$ w.r.t. $N,$ denoted by $\bar{A^{*}}(\frak{a},N)$, is defined to be the set
$\{\frak{q}\cap R\mid\, \frak{q}\in \bar{Q^{*}}(u\mathscr{R},\mathscr{N}) \}$.

In \cite{STY},  Sharp et al. introduced the concept  of integral closure of $\frak{a}$ relative to $N$, and they showed that
 this concept have properties which reflect some of those of the  usual concept of integral closure introduced by Northcott and
 Rees in \cite{NR}. The integral  closure of $\frak{a}$ relative to  $N$ is denoted  by $\frak{a}^{-(N)}$.
In \cite{NS2}, it is shown that the  sequence $\{\Ass_RR/(\frak{a}^n)^{-(N)}\}_{n\geq1}$, of associated prime
 ideals, is increasing and ultimately constant; we  denote its ultimate constant value by $\hat{A^*}(\frak{a}, N)$.  In the case
 $N= R$, $\hat{A^*}(\frak{a}, N)$ is the asymptotic primes $\hat{A^*}(\frak{a})$
 of $\frak{a}$ introduced by Ratliff in  \cite{ra1}. Also, it is shown in \cite[Proposition 3.2]{na2} that
 $\hat{A^*}(\frak{a}, N)=\bar{A^{*}}(\frak{a},N)$.

If $(R,\frak{m})$ is local, then $R^*$ (resp. $N^*$) denotes the completion of $R$ (resp. $N$) w.r.t. the
$\frak{m}$-adic topology. In particular, for every prime ideal $\frak{p}$ of $R$, we denote $R^*_\frak{p}$ and $N^*_\frak{p}$ the
$\frak{p}R_\frak{p}$-adic completion of $R_\frak{p}$ and $N_\frak{p}$, respectively.  For any ideal $\frak{b}$ of $R$, {\it
the radical of} $\frak{b}$, denoted by $\Rad(\frak{b})$, is defined to be the set $\{x\in R \,: \, x^n \in \frak{b}$ for some $n \in
\mathbb{N}\}$. Finally, for each $R$-module $L$, we denote by $\mAss_RL$ the set of minimal prime ideals of $\Ass_RL$.

 Recall that an ideal $\frak b$ of $R$ is called
$N$-proper if $N/\frak bN\neq0$, and, when this the case, we define the $N$-{\it height} of $\frak b$ (written  $\height_N \frak{b}$) to be
$$\inf \lbrace \height_N\frak{p}: \frak{p}\in\Supp{N}\cap V(\frak b)\rbrace,$$ where $ \height_N\frak{p}$ is
defined to be the supremum of lengths of chains of prime ideals of $\Supp(N)$ terminating with $\frak{p}.$ Also, we say that an element
$x$ of $R$ is an $N$-proper element if $N/ xN\neq0$. For any unexplained notation and terminology we refer the reader  to \cite{BH} or \cite{N}.

\section{The main result}
Let $R$ be a commutative Noetherian ring,  and let $N$ be a  non-zero finitely generated $R$-module.  The purpose of the present paper is to  give an investigation of the linearly equivalent of the $\frak a$-symbolic and the  $\frak a$-adic topology topologies on $N$.The main goal of this section is Theorem 2.4.
The following proposition plays a key role in the proof of the main theorem.

\begin{proposition} \label{1.4}
Let $\frak{a}$ be an ideal of $R$ and let $N$ be a non-zero finitely generated $R$-module with $\dim N>0.$
 Let $\mathfrak  {p}\in \Supp(N)\cap V(\frak{a}).$  Then the following conditions are equivalent:

$\rm(i)$  $\mathfrak  {p}\in\bar{A^{*}}(\frak{a},N).$

$\rm(ii)$ $\mathfrak{p}\in\bar{A^{*}}(\frak{a}\frak{b},N)$,  for any $N$-proper ideal $\frak{b}$ of $R$ with $\height_N \frak{b} > 0$.

$\rm(iii)$ $\mathfrak{p}\in\bar{A^{*}}(x\frak{a},N),$  for any  $N$-proper element $x$ of $R$ with
  $x\not\in{\bigcup_{\mathfrak  {p}\in \mAss_{R}N}}\mathfrak  {p}.$

$\rm(iv)$  $\mathfrak{p}\in\bar{A^{*}}(x\frak{a},N),$ for some  $N$-proper element $x$ of $R$ with
  $x\not\in{\bigcup_{\mathfrak  {p}\in \mAss_{R}N}}\mathfrak  {p}.$

\end{proposition}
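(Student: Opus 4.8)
The plan is to establish the cycle $\rm(i)\Rightarrow\rm(ii)\Rightarrow\rm(iii)\Rightarrow\rm(iv)\Rightarrow\rm(i)$, after first reducing every condition to a statement about analytic spread in a complete local domain. Using the identity $\bar{A^{*}}(\frak{a},N)=\hat{A^{*}}(\frak{a},N)$ recalled in the introduction together with the standard description of asymptotic primes over the completion, one has $\frak{p}\in\bar{A^{*}}(\frak{a},N)$ if and only if there exists $\frak{q}\in\mAss_{R^*_\frak{p}}N^*_\frak{p}$ such that, writing $T:=R^*_\frak{p}/\frak{q}$ (a complete local domain with maximal ideal $\frak{m}_T$), one has $\frak{m}_T\in\bar{A^{*}}(\frak{a}T)$, equivalently $\ell_T(\frak{a}T)=\dim T$, where $\ell_T$ denotes analytic spread. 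I would record this reduction first, since it replaces each condition by an assertion about the finitely many domains $T$ attached to the primes $\frak{q}\in\mAss_{R^*_\frak{p}}N^*_\frak{p}$, and it lets me keep the \emph{same} witness $\frak{q}$ throughout the two genuine implications.

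The implications $\rm(ii)\Rightarrow\rm(iii)$ and $\rm(iii)\Rightarrow\rm(iv)$ are formal. For the former, given an admissible $x$ (that is, $N$-proper with $x\notin\bigcup_{\frak{p}\in\mAss_{R}N}\frak{p}$), the ideal $\frak{b}=xR$ is $N$-proper and satisfies $\height_N\frak{b}>0$ precisely because $x$ avoids every minimal prime of $\Supp(N)$; applying $\rm(ii)$ to this $\frak{b}$ gives $\frak{p}\in\bar{A^{*}}(x\frak{a},N)$. For $\rm(iii)\Rightarrow\rm(iv)$ it suffices to produce one admissible $x$: since $\dim N>0$ there is $\frak{p}_1\in\Supp(N)$ with $\height_N\frak{p}_1\ge 1$, and as $\frak{p}_1$ lies in no single minimal prime of $\Supp(N)$, prime avoidance yields $x\in\frak{p}_1\setminus\bigcup_{\frak{p}\in\mAss_{R}N}\frak{p}$, which is admissible.

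For $\rm(i)\Rightarrow\rm(ii)$ I fix a $\frak{q}$ witnessing $\rm(i)$ and keep it. Because $\height_N\frak{b}>0$, the ideal $\frak{b}$ is contained in no minimal prime of $\Supp(N)$, so $\frak{b}R^*_\frak{p}\not\subseteq\frak{q}$ and $\frak{b}T\neq 0$. Thus $(\frak{a}\frak{b})T=(\frak{a}T)(\frak{b}T)$ is a product of nonzero ideals (the factor $\frak{a}T$ being nonzero since $\frak{m}_T\in\bar{A^{*}}(\frak{a}T)$), and I would invoke the fact that the Rees valuations of a product of ideals are exactly the union of the Rees valuations of the factors. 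The Rees valuation of $\frak{a}T$ centred at $\frak{m}_T$ with residual transcendence degree $\dim T-1$ (the one realizing $\ell_T(\frak{a}T)=\dim T$) therefore survives as a Rees valuation of $(\frak{a}\frak{b})T$, whence $\ell_T((\frak{a}\frak{b})T)=\dim T$ and the same $\frak{q}$ witnesses $\frak{p}\in\bar{A^{*}}(\frak{a}\frak{b},N)$.

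The implication $\rm(iv)\Rightarrow\rm(i)$ is the one I expect to be the main obstacle, since here the extra factor must be \emph{removed} without losing the maximal-spread valuation. Fixing a $\frak{q}$ witnessing $\rm(iv)$, the ideal $(x\frak{a})T=((x)T)(\frak{a}T)$ admits a Rees valuation $v$ centred at $\frak{m}_T$ of maximal residual transcendence degree $\dim T-1$; by the same product formula, $v$ is a Rees valuation either of $\frak{a}T$ or of $(x)T$. In the first case $v$ already realizes $\ell_T(\frak{a}T)=\dim T$, giving $\rm(i)$ with the same $\frak{q}$. In the second case $x$ is a nonzero nonunit of $T$ (otherwise $(x)T=T$ has no Rees valuation and we are in the first case); as $x\notin\frak{q}$ it is a nonzerodivisor, so $(x)T$ is principal with $\ell_T((x)T)=1$, which forces $\dim T=1$. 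Since $(x\frak{a})T\neq0$ we have $\frak{a}T\neq0$, and a nonzero ideal of a one-dimensional local domain is $\frak{m}_T$-primary, so $\ell_T(\frak{a}T)=1=\dim T$ and again $\frak{p}\in\bar{A^{*}}(\frak{a},N)$. The heart of the argument is thus the behaviour of $\bar{A^{*}}$ under multiplication of ideals, encoded by the product formula for Rees valuations, together with the observation that the hypotheses $\height_N\frak{b}>0$ and $x\notin\bigcup_{\frak{p}\in\mAss_{R}N}\frak{p}$ are exactly what keep $\frak{b}$ and $x$ nonzero nonzerodivisors in each domain $T$ and so prevent the maximal-spread Rees valuation from being destroyed or spuriously created.
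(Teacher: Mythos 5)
Your proposal is correct in outline, but it takes a genuinely different route from the paper. The paper's proof is essentially a two-step reduction: it passes from the module $N$ to the ring $R/\Ann_R N$ via [na2, Remark 2.4] together with the identity $\height_N \mathfrak{b}=\height(\mathfrak{b}+\Ann_R N/\Ann_R N)$ from [na1, Theorem 2.1], and then disposes of both nontrivial implications, (i)$\Rightarrow$(ii) and (iv)$\Rightarrow$(i), by a single citation of McAdam's Proposition 3.26 on $\hat{A^*}$ of products; the formal steps (ii)$\Rightarrow$(iii)$\Rightarrow$(iv) are exactly as you give them (prime avoidance using $\dim N>0$). You instead push the reduction all the way down to analytic spread in the complete local domains $T=R^*_{\mathfrak{p}}/\mathfrak{q}$, $\mathfrak{q}\in\mAss_{R^*_{\mathfrak{p}}}N^*_{\mathfrak{p}}$, and then re-derive the needed product behaviour by hand from the formula $RV(IJ)=RV(I)\cup RV(J)$ together with the identification of asymptotic primes with centres of Rees valuations and the equivalence $\ell_T(J)=\dim T\Leftrightarrow \mathfrak{m}_T\in\hat{A^*}(J)$ in a quasi-unmixed ring. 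What this buys is transparency: your case analysis in (iv)$\Rightarrow$(i) --- either the maximal-spread valuation already belongs to $\mathfrak{a}T$, or it comes from $(x)T$ and forces $\dim T=1$, where any nonzero proper ideal is $\mathfrak{m}_T$-primary --- makes visible exactly why the hypotheses $\height_N\mathfrak{b}>0$ and $x\notin\bigcup_{\mathfrak{q}\in\mAss_R N}\mathfrak{q}$ are needed, which the paper's citation hides entirely. What it costs is that you have not made the proof more elementary: the inclusion $RV(I)\subseteq RV(IJ)$ (the direction you need for (i)$\Rightarrow$(ii), so that the maximal-spread valuation of $\mathfrak{a}T$ is not lost) is a nontrivial theorem of essentially the same depth as the McAdam proposition it replaces, so you are trading one black box for another; you should cite it explicitly (it holds for ideals not contained in any minimal prime, which your hypotheses guarantee in each domain $T$). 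Two small points to tidy up: the degenerate case $\dim T=0$ (where every condition holds trivially and no Rees valuations exist) should be set aside at the start of (iv)$\Rightarrow$(i), and the claim that $x\notin\mathfrak{q}$ deserves the one-line justification that $\mathfrak{q}\cap R$ is a minimal prime of $\Supp(N)$.
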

\begin{proof}
 (i)$\Longrightarrow$(ii): Let
$\mathfrak{p}\in\bar{A^{*}}(\frak{a},N)$
and let $\frak{b}$ be an $N$-proper ideal of $R$ such that $\height_N \frak{b} >0$. Then, in view of
\cite[Remark 2.4]{na2},
$$\mathfrak  {p}/\Ann_R N \in \hat{A^{*}}(\frak{a}+\Ann_R N/\Ann_R N). $$
Hence, as by
 \cite[Theorem 2.1]{na1},
$$\height_N \frak{b}=\height(\frak{b}+\Ann_R N/\Ann_R N) >0,$$
it follows from
 \cite[Proposition 3.26]{mc1}  that
$$\mathfrak  {p}/\Ann_R N \in \hat{A^{*}}(\frak{a}\frak{b}+\Ann_R N/\Ann_R N).$$
Therefore by using \cite[Remark 2.4]{na2},
we obtain that
$\mathfrak  {p}\in\bar{A^{*}}(\frak{a}\frak{b},N),$ as required.

 (ii)$\Longrightarrow$(iii): Let (ii) hold and let
$x$ be an $N$-proper element of $R$ such that
$x\not\in{\bigcup_{\mathfrak  {p}\in \mAss_{R}N}}\mathfrak  {p}.$
Then it is easy to see that
$\height_N xR >0,$ and so according to the assumption (ii), we have
$\mathfrak  {p}\in\bar{A^{*}}(x\frak{a},N).$

(iii)$  \Longrightarrow $(iv):  Since $\dim N>0,$ there exists
$\mathfrak {q}\in \Supp N$ such that
$\height_N \mathfrak{q}>0.$ Hence
$\mathfrak{q}\nsubseteq{\bigcup_{\mathfrak  {p}\in  \mAss_{R}N}}\mathfrak  {p},$
and so there is
$x\in\mathfrak{q}$ such that
$x\not\in{\bigcup_{\mathfrak  {p}\in  \mAss_{R}N}}\mathfrak  {p}.$
 Consequently, it follows from the hypothesis (iii) that
$\mathfrak  {p}\in\bar{A^{*}}(x\frak{a},N).$

 (iv)$ \Longrightarrow$(i):  Let $x$ be an $N$-proper element of $R$ such that
$x\not\in{\bigcup_{\mathfrak  {p}\in  \mAss_{R}N}}\mathfrak  {p}$ and  let
 $\mathfrak{p}\in\bar{A^{*}}(x\frak{a},N).$  Then
$$\mathfrak {p}/\Ann_R N \in\hat{A^{*}}(x\frak{a}+\Ann_R N/\Ann_R N),$$
by \cite[Remark 2.4]{na2}.
Now, since
$x\not\in{\bigcup_{\mathfrak  {p}\in \mAss_{R}N}}\mathfrak  {p},$
it is easy to see that
$x+\Ann_R N$ is not in any minimal prime
$R/\Ann_R N.$
Therefore, it follows from  \cite[Proposition 3.26]{mc1} that
$$\mathfrak {p}/\Ann_R N \in \hat{A^{*}}(\frak{a}+\Ann_R N/\Ann_R N).$$
Consequently, in view of \cite[Remark 2.4]{na2},
$\mathfrak{p}\in\bar{A^{*}}(\frak{a},N),$ and this completes the proof.
\end{proof}

Before we state Theorem 2.4 which is our main result, we give a couple of lemmas that will be used in the proof of Theorem 2.4.
\begin{lemma} \label{1.5}
Let $(R,\mathfrak{m})$ be a local ring and let $N$ be a non-zero finitely
generated $R$-module such that $\dim N>0$ and that
$\Ass_R N$ has at least two elements. Then there is an ideal $\frak a$ of $R$ such that $\mathfrak{m}\in 	Q(\frak a,N)\setminus \mAss N/\frak aN.$
\end{lemma}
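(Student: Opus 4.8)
The plan is to exhibit the ideal explicitly by taking $\frak{a}$ to be a carefully chosen non-maximal prime $\frak{p}'$ of $\Supp(N)$. First I would record a reduction: it suffices to produce a prime $\frak{p}'\in\Supp(N)$ with $\frak{p}'\neq\frak{m}$ together with an associated prime $\frak{p}_1\in\Ass_R N$ such that $\frak{p}'+\frak{p}_1$ is $\frak{m}$-primary; then $\frak{a}:=\frak{p}'$ works. Indeed, since $\frak{p}'$ is a non-maximal prime lying in $\Supp(N)\cap V(\frak{a})=\Supp(N/\frak{a}N)$, the maximal ideal $\frak{m}$ is not a minimal element of $\Supp(N/\frak{a}N)$, so $\frak{m}\notin\mAss_R(N/\frak{a}N)$ (and $N/\frak{a}N\neq 0$ by Nakayama). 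For the other condition, recall that $R$ local gives $R^{*}_{\frak{m}}=R^{*}$ and $N^{*}_{\frak{m}}=N^{*}$, and that $\Ass_{R^{*}}N^{*}=\bigcup_{\frak{p}\in\Ass_R N}\Ass_{R^{*}}(R^{*}/\frak{p}R^{*})$; hence there is $\frak{q}\in\Ass_{R^{*}}N^{*}$ with $\frak{q}\cap R=\frak{p}_1$ (e.g. a minimal prime of $\frak{p}_1 R^{*}$). Since $\frak{p}'+\frak{p}_1$ is $\frak{m}$-primary, $\Rad(\frak{p}'R^{*}+\frak{q})\supseteq\Rad((\frak{p}'+\frak{p}_1)R^{*})=\frak{m}R^{*}$, so $\Rad(\frak{a}R^{*}+\frak{q})=\frak{m}R^{*}$ and $\frak{m}\in Q(\frak{a},N)$.

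The heart of the argument is the construction of $\frak{p}'$ and $\frak{p}_1$, and this is where the two hypotheses enter. I would choose $\frak{p}_2\in\mAss_R N$ with $\dim R/\frak{p}_2=\dim N>0$, and then choose $\frak{p}_1\in\Ass_R N$ with $\frak{p}_1\neq\frak{p}_2$, possible because $\Ass_R N$ has at least two elements. Since $\frak{p}_2$ is a minimal prime of $N$, it cannot contain the distinct associated prime $\frak{p}_1$, so $\frak{p}_1\not\subseteq\frak{p}_2$. Passing to the Noetherian local domain $D:=R/\frak{p}_2$ of dimension $\dim N\geq 1$, the image $\overline{\frak{p}_1}$ of $\frak{p}_1$ is a non-zero ideal, whence $e:=\dim D/\overline{\frak{p}_1}=\dim R/(\frak{p}_1+\frak{p}_2)\leq\dim D-1$. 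I would then pick $y_1,\dots,y_e\in\frak{m}$ whose images form a system of parameters for $D/\overline{\frak{p}_1}$, so that $\overline{\frak{p}_1}+(y_1,\dots,y_e)D$ is $\frak{m}D$-primary. As cutting by $e$ elements drops dimension by at most $e$, we get $\dim D/(y_1,\dots,y_e)D\geq\dim D-e\geq 1$, so $(y_1,\dots,y_e)D$ has a minimal prime $\frak{P}$ with $\dim D/\frak{P}\geq 1$; I let $\frak{p}'$ be the preimage of $\frak{P}$ in $R$.

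It then remains to verify the two required features of $\frak{p}'$, which is routine. It is non-maximal because $\dim R/\frak{p}'=\dim D/\frak{P}\geq 1$, and it lies in $\Supp(N)$ because $\frak{p}'\supseteq\frak{p}_2$. Moreover $\frak{p}'+\frak{p}_1\supseteq\frak{p}_2+(y_1,\dots,y_e)+\frak{p}_1$, whose image in $D$ is the $\frak{m}D$-primary ideal $\overline{\frak{p}_1}+(y_1,\dots,y_e)D$, so $\frak{p}'+\frak{p}_1$ is $\frak{m}$-primary, completing the construction. The main obstacle I anticipate is exactly this transversality step: arranging a positive-dimensional prime $\frak{p}'$ of a single component $V(\frak{p}_2)$ of $\Supp(N)$ whose sum with $\frak{p}_1$ is $\frak{m}$-primary. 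The decisive point making it possible is the strict drop $\dim R/(\frak{p}_1+\frak{p}_2)\leq\dim R/\frak{p}_2-1$, which is precisely what having two distinct associated primes (one minimal of top dimension, one not contained in it) together with $\dim N>0$ provides; with a single associated prime, or with $\dim N=0$, this room disappears. (The degenerate case $\frak{p}_1=\frak{m}$ is covered automatically: then $e=0$, $\frak{p}'=\frak{p}_2$, and $\frak{q}=\frak{m}R^{*}$.)
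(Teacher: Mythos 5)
Your proof is correct, and it is worth noting that the paper itself offers no argument for this lemma: its ``proof'' is a bare citation to Proposition 4.2 of the submitted companion paper \cite{amn1}, so there is nothing in the text to compare against and your self-contained construction is genuinely valuable. The reduction is sound: a non-maximal prime $\frak p'\in\Supp(N)$ together with $\frak p_1\in\Ass_RN$ such that $\frak p'+\frak p_1$ is $\frak m$-primary gives $\frak m\notin\mAss N/\frak p'N$ (since $\frak p'$ is a non-maximal element of $\Supp(N/\frak p'N)$), while the flat base change formula $\Ass_{R^*}N^*=\bigcup_{\frak p\in\Ass_RN}\Ass_{R^*}(R^*/\frak pR^*)$ supplies $\frak q\in\Ass_{R^*}N^*$ with $\frak q\supseteq\frak p_1R^*$, whence $\Rad(\frak p'R^*+\frak q)=\frak mR^*$ and $\frak m\in Q(\frak p',N)$. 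The construction itself is where both hypotheses enter and it checks out: choosing $\frak p_2\in\mAss_RN$ with $\dim R/\frak p_2=\dim N\geq 1$ and $\frak p_1\in\Ass_RN$ distinct from (hence not contained in) $\frak p_2$, the image of $\frak p_1$ in the domain $D=R/\frak p_2$ is nonzero, so $e=\dim D/\overline{\frak p_1}\leq\dim D-1$; cutting by $e$ parameters leaves $\dim D/(y_1,\dots,y_e)D\geq\dim D-e\geq 1$, so a suitable minimal prime $\frak P$ yields a non-maximal $\frak p'\supseteq\frak p_2$ in $\Supp(N)$ with $\frak p'+\frak p_1\supseteq\frak p_2+\frak p_1+(y_1,\dots,y_e)$ $\frak m$-primary. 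The degenerate case $\frak p_1=\frak m$ is correctly absorbed ($e=0$, $\frak p'=\frak p_2$). I cannot certify that this is the same argument as in the unpublished reference, but as a standalone proof of the lemma it is complete.
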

\begin{proof}
See
\cite[Proposition 4.2]{amn1}.
\end{proof}

\begin{lemma}\label{1.7}
 Let $N$ be  a non-zero finitely
generated $R$-module and let $\frak a$ be an
$N$-proper ideal of $R$. Then  $E(\frak a,N)=\mAss_{R}N/ \frak aN  $
if and only if the $\frak a$-symbolic topology is linearly equivalent to the $\frak a$-adic  topology.
\end{lemma}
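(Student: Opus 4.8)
The plan is to localize the problem to a single local statement about the maximal ideal and then globalize using finiteness of the set of asymptotic primes. First I would record two reductions. Passing to $R/\Ann_R N$ leaves the filtrations $\{\frak a^nN\}$ and $\{(\frak aN)^{(n)}\}$, the set $\mAss_R N/\frak aN$, and the set $E(\frak a,N)$ unchanged (each prime being replaced by its image), so I may assume $N$ faithful. Moreover the inclusion $\mAss_R N/\frak aN\subseteq E(\frak a,N)$ always holds, since a minimal prime of $N/\frak aN$ lies in $\bar{A^{*}}(\frak a,N)\subseteq E(\frak a,N)$; hence the lemma reduces to showing that linear equivalence is equivalent to $E(\frak a,N)\subseteq\mAss_R N/\frak aN$, i.e. to the absence of an essential prime that properly contains a minimal prime of $N/\frak aN$.

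Next I would use the saturation description $(\frak aN)^{(n)}=\{x\in N: sx\in\frak a^nN\text{ for some }s\in S\}$ with $S=R\setminus\bigcup_{\frak p\in\mAss_R N/\frak aN}\frak p$, valid because prime avoidance forces any associated prime of $N/\frak a^nN$ lying under $S$ to be minimal. From this, $(\frak aN)^{(n)}$ and $\frak a^nN$ have equal localizations at every $\frak p\in\mAss_R N/\frak aN$, whereas at a non-minimal $\frak p\in\Supp N\cap V(\frak a)$ one gets $((\frak aN)^{(n)})_\frak p=(\frak aN_\frak p)^{(n)}$. Since an inclusion of submodules of $N$ may be tested at the associated primes of the relevant quotient, the containment $(\frak aN)^{(n+k)}\subseteq\frak a^nN$ need only be checked at primes in the finite set $T=\bigcup_n\Ass_R N/\frak a^nN$ (finite by Brodmann's theorem). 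It is automatic at the minimal members of $T$, which are exactly $\mAss_R N/\frak aN$, so a global $k$ exists if and only if, for each non-minimal $\frak p\in T$, the $\frak aR_\frak p$-symbolic and $\frak aR_\frak p$-adic topologies on $N_\frak p$ are linearly equivalent; finiteness of $T$ then lets me pass from the local bounds $k_\frak p$ to the single bound $k=\max_\frak p k_\frak p$.

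The crux is therefore the local statement: for $(R,\frak m)$ local with $\dim N/\frak aN>0$, the $\frak a$-symbolic and $\frak a$-adic topologies on $N$ are linearly equivalent if and only if $\frak m\notin E(\frak a,N)$. I would prove it inside the Rees module: let $\mathscr P$ be the relevant homogeneous prime of $\mathscr R$ with $\mathscr P\cap R=\frak m$ and $u\in\mathscr P$, so that $\frak m\in E(\frak a,N)$ reads $\mathscr P\in Q(u\mathscr R,\mathscr N)$, i.e. in $\mathscr R^{*}_{\mathscr P}$ there is $\frak q\in\Ass\mathscr N^{*}_{\mathscr P}$ with $\Rad(u\mathscr R^{*}_{\mathscr P}+\frak q)=\mathscr P\mathscr R^{*}_{\mathscr P}$. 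Such a $\frak q$ exhibits $u$ as a zerodivisor on $\mathscr N$ whose annihilator behavior is not Artin--Rees bounded, which I would convert into the assertion that the $\frak m$-primary component of $\frak a^nN$ is present for large $n$ yet recedes strictly more slowly than $\frak a^nN$, so that no constant $k$ can force $(\frak aN)^{(n+k)}\subseteq\frak a^nN$. For the converse, when $\mathscr P\notin Q(u\mathscr R,\mathscr N)$ the element $u$ misses all the associated primes producing the obstruction, and an Artin--Rees estimate on $\mathscr N$ over $\mathscr R$ supplies the required constant; the localization and completion maneuvers here, together with the reduction to a single general element $x$, are of exactly the kind performed in the preceding Proposition.

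Putting the local bridge together with the reduction of the previous paragraph, linear equivalence holds precisely when no non-minimal prime of $\Supp N\cap V(\frak a)$ is essential, that is, precisely when $E(\frak a,N)\subseteq\mAss_R N/\frak aN$; combined with the always-valid reverse inclusion this is the desired equality. The step I expect to be the main obstacle is the local bridge itself: turning the membership $\mathscr P\in Q(u\mathscr R,\mathscr N)$ into the quantitative, non-linear lag of the $\frak m$-primary component of $\frak a^nN$. This is the module analogue of the essential-prime analysis of McAdam and Schenzel, and would rest on the stability of $\{\Ass_R N/\frak a^nN\}$, on the identification $\hat{A^{*}}(\frak a,N)=\bar{A^{*}}(\frak a,N)$ of \cite[Proposition 3.2]{na2}, and on the completion arguments of \cite{mc1}; the saturation computation, the reduction to associated primes, and the Artin--Rees estimate are by comparison routine.
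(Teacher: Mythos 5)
There is a genuine gap. The paper disposes of this lemma in one line, by citing \cite[Theorem 4.1]{na3}, which is precisely the module version of the McAdam--Schenzel characterization of essential primes in terms of linear equivalence of the symbolic and adic topologies. Your proposal, in effect, undertakes to reprove that theorem, and the reductions you carry out (passing to $R/\Ann_RN$, the saturation description of $(\frak aN)^{(n)}$, testing the containment $(\frak aN)^{(n+k)}\subseteq\frak a^nN$ at the finitely many primes in $\bigcup_n\Ass_RN/\frak a^nN$, and the trivial inclusion $\mAss_RN/\frak aN\subseteq E(\frak a,N)$) are all sound and standard. But they only relocate the entire content of the lemma into what you call the ``local bridge,'' and that step is not proved: the passage from $\mathscr P\in Q(u\mathscr R,\mathscr N)$ to the failure of any linear bound is described only as something you ``would convert'' into a statement about the $\frak m$-primary component receding ``strictly more slowly,'' and the converse direction is asserted to follow from ``an Artin--Rees estimate'' without saying which one or why the hypothesis $\mathscr P\notin Q(u\mathscr R,\mathscr N)$ makes it applicable. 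That bridge is exactly the nontrivial theorem being invoked by the paper, so the proposal does not yet constitute a proof.

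A secondary point: the local statement you isolate --- for $(R,\frak m)$ local, the $\frak a$-symbolic and $\frak a$-adic topologies on $N$ are linearly equivalent if and only if $\frak m\notin E(\frak a,N)$ --- is not correct as formulated. If some prime $\frak p$ with $\frak a+\Ann_RN\subseteq\frak p\subsetneq\frak m$ lies in $E(\frak a,N)\setminus\mAss_RN/\frak aN$, linear equivalence already fails even though $\frak m$ may fail to be essential. What your reduction actually requires at each non-minimal $\frak p\in T$ is a bound on the $\frak pR_\frak p$-primary component of $\frak a^nN_\frak p$ alone (equivalently, you need either to restate the bridge in terms of that single primary component or to run an induction on $\height_N\frak p$); as written, demanding full linear equivalence of the localized topologies at each such $\frak p$ and tying it to membership of the maximal ideal alone does not match. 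Neither defect is fatal to the strategy --- it is essentially the strategy of \cite{mc1} and \cite{na3} --- but both would need to be repaired before the argument stands on its own.
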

\begin{proof}
The assertion follows easily from \cite[Theorem 4.1]{na3}.
\end{proof}

We are now ready to state and prove the main theorem of this paper
which is a characterization of the certain modules in terms of the linear
equivalence of certain topologies induced by families of submodules of
a  finitely generated  module $N$ over a commutative Noetherian ring $R$. We denote by $Z_R(N)$  the set of zero divisors on $N$, i.e.,
$Z_R(N):=\{r\in R\,|\, rx=0\, \text{for some}\, \, x(\neq 0) \in N  \}$.

\begin{theorem}\label{1.8}
 Let $N$ be  a non-zero finitely generated $R$-module. Then the following conditions are equivalent:

$\rm(i)$  For every $N$-proper ideal $\frak b$  of  $R,$  the $\frak b$-symbolic topology is linearly equivalent  to the  $\frak b$-adic topology.

$\rm(ii)$ $\dim N\leq 1$ and  $\Ass_{R_{\mathfrak  {p} }}N_{\mathfrak  {p}}$ consists of a single prime ideal, for all  $\mathfrak{p}\in\Supp(N).$
\end{theorem}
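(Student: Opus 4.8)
The plan is to route everything through the criterion of Lemma \ref{1.7}: condition (i) is equivalent to the assertion that $E(\frak b,N)=\mAss_R N/\frak bN$ for every $N$-proper ideal $\frak b$. Since $\mAss_R N/\frak bN\subseteq E(\frak b,N)$ always holds, the entire problem reduces to the reverse inclusion, i.e.\ to deciding when $\frak b$ can acquire an essential prime strictly above a minimal prime of $\frak bN$. Throughout I would use three standard features of these prime sets: that they localize well (so $\frak p\in E(\frak b,N)$ forces $\frak pR_\frak p\in E(\frak bR_\frak p,N_\frak p)$), the chain $\mAss_R N/\frak bN\subseteq\bar{A^{*}}(\frak b,N)\subseteq E(\frak b,N)$ (the last inclusion because $\bar{Q^{*}}\subseteq Q$, as $\mAss\subseteq\Ass$), and of course Proposition \ref{1.4} together with Lemmas \ref{1.5} and \ref{1.7}.

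For (ii)$\Rightarrow$(i), fix an $N$-proper ideal $\frak b$ and suppose, for a contradiction, that some $\frak p\in E(\frak b,N)$ is not minimal over $\frak bN$. Then there is $\frak q\in\Supp(N)\cap V(\frak b)$ with $\frak q\subsetneq\frak p$, and since $\dim N\le 1$ the chain $\frak q\subsetneq\frak p$ is maximal in $\Supp(N)$, so $\frak q\in\mAss_R N$ and $\frak b\subseteq\frak q$. Localizing at $\frak p$ and using the single-associated-prime hypothesis, $\frak qR_\frak p$ is the unique associated, hence unique minimal, prime of $N_\frak p$; thus $\Rad(\Ann_{R_\frak p}N_\frak p)=\frak qR_\frak p\supseteq\frak bR_\frak p$, whence $\frak b^{k}N_\frak p=0$ for some $k$, and the zero submodule of $N_\frak p$ is $\frak qR_\frak p$-primary. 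Consequently, for $n\ge k$ the $n$-th symbolic power of $\frak bR_\frak p$ relative to $N_\frak p$ is the $\frak qR_\frak p$-primary component of $\frak b^{n}N_\frak p=0$, which is $0$; so it equals $\frak b^{n}N_\frak p$. The symbolic and adic filtrations of $\frak bR_\frak p$ on $N_\frak p$ therefore agree for large $n$, and Lemma \ref{1.7} applied over $R_\frak p$ gives $E(\frak bR_\frak p,N_\frak p)=\mAss_{R_\frak p}N_\frak p/\frak bN_\frak p=\{\frak qR_\frak p\}$, excluding $\frak pR_\frak p$ and contradicting $\frak p\in E(\frak b,N)$. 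Hence $E(\frak b,N)=\mAss_R N/\frak bN$ for all $\frak b$, and (i) follows.

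For the converse I would prove the contrapositive, splitting according to how (ii) fails. If $\dim N\ge 2$, choose $\frak p\in\Supp(N)$ with $\height_N\frak p\ge 2$. Then $\frak p\in\mAss_R N/\frak pN\subseteq\bar{A^{*}}(\frak p,N)$, so Proposition \ref{1.4} (i)$\Rightarrow$(iii) applies: picking an $N$-proper element $x\in\frak p$ with $x\notin\bigcup_{\frak p'\in\mAss_R N}\frak p'$ (possible since $\frak p$ is not minimal), we obtain $\frak p\in\bar{A^{*}}(x\frak p,N)\subseteq E(x\frak p,N)$. On the other hand, by the module form of Krull's principal ideal theorem any prime $\frak q$ minimal over $xN$ in $\Supp(N)$ and lying below $\frak p$ has $\height_N\frak q=1$, so $\frak q\subsetneq\frak p$ and $\frak q\in\Supp(N)\cap V(x\frak p)$; thus $\frak p\notin\mAss_R N/(x\frak p)N$. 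Setting $\frak b=x\frak p$, which is $N$-proper, gives $E(\frak b,N)\ne\mAss_R N/\frak bN$, so by Lemma \ref{1.7} the two topologies are not linearly equivalent, contradicting (i).

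In the remaining case $\dim N\le 1$ but $\Ass_{R_\frak p}N_\frak p$ has at least two elements for some $\frak p\in\Supp(N)$; then necessarily $\dim N_\frak p=1>0$, since otherwise $N_\frak p$ would have a single associated prime. Applying Lemma \ref{1.5} to the local ring $R_\frak p$ and the module $N_\frak p$ produces an ideal with $\frak pR_\frak p$ a \emph{non-minimal} quintessential prime, i.e.\ $\frak pR_\frak p\in Q(\cdot\,,N_\frak p)\setminus\mAss_{R_\frak p}N_\frak p/(\cdot)N_\frak p$; as a non-minimal quintessential prime obstructs even the topological equivalence of the symbolic and adic topologies, it obstructs their linear equivalence, and delocalizing yields an $N$-proper ideal of $R$ violating (i). I expect this last case to be the main obstacle: the output of Lemma \ref{1.5} lives among the \emph{quintessential} primes $Q$, whereas Lemma \ref{1.7} is phrased through the \emph{essential} primes $E$, so the delicate step is the passage from a non-minimal member of $Q$ to an actual failure of linear equivalence (equivalently, to a non-minimal member of $E$), which I would justify through the quintessential-prime description of topological equivalence from \cite{amn1,na3}.
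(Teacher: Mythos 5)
Your proposal is correct in substance, and it is half the paper's argument and half a genuinely different one. The direction (i)$\Rightarrow$(ii) is essentially identical to the paper's: the same ideal $x\frak p$ built by prime avoidance from a prime of $N$-height at least $2$, the same appeal to Proposition \ref{1.4} and to the inclusion $\bar{A^{*}}(x\frak p,N)\subseteq E(x\frak p,N)$, and the same use of Lemma \ref{1.5} after localizing when $\Ass_{R_{\frak p}}N_{\frak p}$ has two elements. The step you rightly flag as delicate --- passing from $\frak pR_{\frak p}\in Q(\frak aR_{\frak p},N_{\frak p})\setminus \mAss_{R_{\frak p}}N_{\frak p}/\frak aN_{\frak p}$ to an actual failure of (i) --- is exactly what the paper dispatches by citing \cite[Lemma 3.2 and Theorem 3.17]{ah} (localization of $E$ together with the passage from $Q$ to $E$); your detour through ``topological equivalence'' is an unnecessary weakening, since $Q\subseteq E$ plus Lemma \ref{1.7} already yields the failure of \emph{linear} equivalence directly. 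Where you genuinely diverge is (ii)$\Rightarrow$(i). The paper reduces to $(R,\frak p)$ local and splits on whether $\frak b\subseteq Z_R(N)$: if not, $\grade(\frak b,N)>0$ and $\dim N=1$ force $\frak b+\Ann_RN$ to be $\frak p$-primary; if so, it invokes \cite[Proposition 3.6]{ah} and \cite[Remark 2.3]{KR} to reach the contradiction $\frak p=z$. You instead suppose $\frak p\in E(\frak b,N)$ fails to be minimal over $\frak bN$, note that $\dim N\leq 1$ forces the intermediate prime $\frak q$ to lie in $\mAss_RN$, use the single-associated-prime hypothesis to get $\Rad(\Ann_{R_{\frak p}}N_{\frak p})=\frak qR_{\frak p}\supseteq\frak bR_{\frak p}$, conclude that $\frak b$ is nilpotent on $N_{\frak p}$ so the symbolic and adic filtrations eventually coincide there (your computation that $\mAss_{R_{\frak p}}N_{\frak p}/\frak bN_{\frak p}=\{\frak qR_{\frak p}\}$ and that $0$ is $\frak qR_{\frak p}$-primary is correct), and then reuse Lemma \ref{1.7} over $R_{\frak p}$ together with the localization of $E$ to exclude $\frak pR_{\frak p}$. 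This buys independence from the Katz--Ratliff reference and from any grade considerations, at the price of using Lemma \ref{1.7} in both directions and leaning once more on \cite[Lemma 3.2]{ah}. Both routes are sound.
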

\begin{proof}
Suppose that (i) holds. Firstly, we show that
$\dim N\leq 1.$ To achieve this, suppose the contrary is true. That is
$\dim N>1. $ Then there exists
 $\mathfrak  {p}\in \Supp(N)$ such that
 $ \height_{N}\mathfrak{p}>1.$ Hence
 $ \mathfrak {p}\nsubseteq{\bigcup_{\mathfrak{q}\in \mAss_{R}N}} \mathfrak{q},$
 and so there exists
 $x\in \mathfrak {p}$ such that
 $x\not\in \bigcup_{\mathfrak{q}\in \mAss_{R}N}\mathfrak{q}.$
 Now, since
$\mathfrak  {p}\in\bar{A^{*}}(\mathfrak  {p},N)$ and
$xN\neq N, $ it follows from Proposition \ref{1.4} that
 $\mathfrak  {p}\in\bar{A^{*}}(x\mathfrak  {p},N).$ Therefore, in view of
 \cite[Theorem 3.17]{ah} we have  $\mathfrak {p}\in E(x\mathfrak {p},N).$

  On other hand, since
  $x\not\in \bigcup_{\mathfrak{q}\in \mAss_{R}N}\mathfrak{q},$
  it is easily seen that
   $\mathfrak  {p}\not\in \mAss_{R}N/x\mathfrak{p}N,$ and so by  the assumption
   (i) and Lemma \ref{1.7} we have
  $\mathfrak  {p}\not\in E(x\mathfrak  {p},N),$ which is a contradiction. Hence,
  $\dim  N\leq 1.$ Now, we show that
   $\Ass_{R_{\mathfrak{p} }}N_{\mathfrak{p}}$ consists of a single prime ideal, for all
   $\mathfrak{p}\in \Supp(N).$ To do this, if
   $\dim     N=0,$ then $\dim    N_{\mathfrak{p}}=0.$ Hence
   $\Ass_{R_{\mathfrak{p} }}N_{\mathfrak{p}}=\lbrace{\mathfrak{p}} R_{\mathfrak{p} }\rbrace,$
   as required. Consequently, we have
   $\dim N_{\mathfrak{p}}=1.$ Now, if
    $\Ass_{R_{\mathfrak{p} }}N_{\mathfrak{p}}$ has at least two elements, then   in view of  Lemma
    \ref{1.5} there exists an ideal
    $\frak aR_{\mathfrak{p}}$ of
    $R_{\mathfrak{p}}$ such that
    $\mathfrak{p}R_{\mathfrak{p}}\in Q(\frak aR_{\mathfrak{p}},N_{\mathfrak{p}})$ but
    $\mathfrak{p}R_{\mathfrak{p}}\not\in \mAss_{R_{\mathfrak{p} }}N_{\mathfrak{p}}/\frak aR_{\mathfrak{p}}.$
   Therefore, in view of
   \cite[Lemma 3.2 and Theorem 3.17]{ah},
   $\mathfrak{p}\in E(\frak a,N)\setminus \mAss N/\frak aN,$
   which is a contradiction.

   In order to show the implication
  (ii)$\Longrightarrow$(i), in view of Lemma
   \ref{1.7} it is enough for us to show that
   $E(\frak b,N)=\mAss N/\frak bN.$ To this end, let
   $\mathfrak{p}\in E(\frak b,N).$ By virtue of
    \cite[Lemma 3.2]{ah}, we may assume that
    $(R,\mathfrak{p})$ is local.
   \\ Firstly, suppose $\dim N=0.$ Then it readily follows that
   $\frak p\in\mAss N/\frak bN$, as required. So we may assume that
   $\dim N=1.$ There are two cases to consider:\\

 \textbf{Case 1.}
 $\frak b\nsubseteq Z_R(N).$ Then
 $\grade(\frak b,N)>0.$ Since
 $\dim N=1,$ it follows that
 $\height_N\frak b=1,$ and so
 $\frak b+\Ann_RN$ is
 $\mathfrak{p}$-primary. Hence
 $\mathfrak{p}\in \mAss N/\frak bN,$ as required.\\

  \textbf{Case 2.} Now, suppose that
  $\frak b\subseteq Z_R(N).$ Then there exists
  $z\in \Ass_RN$  such that
  $\frak b\subseteq z.$ Since
  $\Ass_RN$ consists of a single prime ideal, so
  $\Ass_R N=\lbrace z\rbrace.$ Hence in view of
 \cite[Proposition 3.6]{ah},
 $\mathfrak{p}/z\in E(\frak b+z/z,R/z).$ Since
 $\frak b\subseteq z,$ it follows from \cite[Remark 2.3]{KR} that
 $ \mathfrak{p}=z,$  which is a contradiction, because
  $\dim N=1.$ Consequently,
 $\frak b\nsubseteq Z_R(N)$ and the claim holds.
\end{proof}


\begin{center}
{\bf Acknowledgments}
\end{center}
The authors would like to thank Professor Kamran Divaani-Aazar for reading of the first draft and valuable discussions.
Also, the authors would like to thank from the Institute for Research in Fundamental Sciences (IPM), for the financial support.



\begin{thebibliography}{999}

\bibitem{ah}
S. H. Ahn, {\it Asymptotic primes and asymptotic grade on modules},  J. Algebra  {\bf174} (1995), 980-998.
\bibitem{amn1}
 A. Azari, S. Mollamahmoudi and R. Naghipour, {\it Symbolic powers of ideals and their topology over a module},  submitted.
\bibitem{BH} W. Bruns and J. Herzog, {\it Cohen-Macaulay Rings}, Cambridge Studies in Advanced Mathematics, Vol. 39, Cambridge
Univ. Press, Cambridge, UK, 1998.
\bibitem{KR} D. Katz and L.J. Ratliff, {\it U-essential prime divisors and sequences over an ideal}, Nagoya Math. J.  {\bf 103}  (1986), 39-66.
\bibitem{mc1}
S. McAdam, {\it Asymptotic Prime Divisors}, Lecture Notes in Math. No. {\bf1023}, Springer-Verlag, New York, 1983.
\bibitem{Mc2}
S. McAdam, {\it Quintasymptotic primes and four results of Schenzel}, J. Pure and Appl. Algebra  {\bf47} (1987),  283-298.
\bibitem{N}
M. Nagata, {\it Local Rings}, Interscience, New York, 1961.
\bibitem{na1}
R. Naghipour, {\it Locally unmixed  modules and ideal topologies}, J. Algebra  {\bf236} (2001), 768-777.
\bibitem{na3}
R. Naghipour, {\it Quintessntial primes and ideal  topologies over a module}, Comm. Algebra {\bf29} (2001), 3495-3506.
\bibitem{na2}
R. Naghipour, {\it Associated primes, integral closures and ideal topologies}, Coll. Math.  {\bf105} (2006), 35-43.
\bibitem{NS2}
R. Naghipour and P. Schenzel, {\it Asymptotic behaviour of integral closures, quintasymptotic primes and ideal topologies}, submitted.
\bibitem{NR} D.G. Northcott and D. Rees, {\it Reductions of ideals in local rings},  Proc. Cambridge Philos. Soc. {\bf50} (1954), 145-158.
\bibitem{ra1}
L. J. Ratliff, Jr., {\it On asymptotic prime divisors}, Pacific J. Math. {\bf111} (1984), 395-413.
\bibitem{STY}
 R. Y. Sharp, Y. Tiras and M. Yassi, {\it Integral closures of ideals relative to local cohomology modules over quasi-unmixed
local rings}, J. London Math. Soc.  {\bf42} (1990), 385-392.
\end{thebibliography}
\end{document}